\documentclass[10pt,twoside,a4paper,reqno]{amsart}
\usepackage{bulletinUPB_3}
\usepackage{amsfonts,amsthm,amsmath,graphicx,enumerate}
\usepackage{graphics,graphicx,amssymb,eucal,amsmath}

%
\info{A}{72}{2}{2010}
\setcounter{page}{1}
%
\theoremstyle{plain}

\theoremstyle{definition}

\makeatletter \@addtoreset{equation}{section} \makeatother

%

%

\def\beq#1{\begin{equation}\label{#1}}
\def\eeq{\end{equation}}

%

\title{\hspace{0.8cm} {Viscosity solutions of divergence type PDEs} \newline \hspace{-1.3cm}{associated to multitime hybrid games}}

\author{Constantin {\sc Udri\c ste}}
\address{$^{1}$Professor, Department of Mathematics and Informatics, Faculty of Applied
Sciences, University {\sc Politehnica} of Bucharest, Splaiul
Independentei 313,RO-060042, Bucharest, Romania.
   E-mail: {\tt udriste@mathem.pub.ro}}
\author{Ionel {\sc \c Tevy}}
\address{$^{2}$Professor, Department of Mathematics and Informatics, Faculty of Applied
Sciences, University {\sc Politehnica} of Bucharest, Splaiul
Independentei 313,RO-060042, Bucharest, Romania.
   E-mail: {\tt vascatevy@yahoo.fr}}
\author{Elena-Laura {\sc Otob\^{i}cu}}
\address{$^{3}$PhD student, Department of Mathematics and Informatics, Faculty of Applied
Sciences, University {\sc Politehnica} of Bucharest, Splaiul
Independentei 313,RO-060042, Bucharest, Romania.
E-mail: {\tt laura.otobicu@gmail.com}}


\begin{document}
\pagestyle{headings} \maketitle

\begin{abstract}
{\it Our original results are associated to a multitime hybrid game, with two equips of players,
based on a multiple integral functional
and an $m$-flow as constraint. The aim of this paper is three-fold:
(i) to define the multitime lower or upper value function;
(ii) to build Divergence type PDEs ;
(iii) to give viscosity solutions of previous PDEs.}
\end{abstract}

\begin{Keywords}
    multitime hybrid differential games; divergence type PDE;
multitime viscosity solution; multitime dynamic programming.
\end{Keywords}

\begin{MSC2010}
49L20, 91A23, 49L25.
\end{MSC2010}

\section{Multitime lower or upper value function}
All variables and functions
must satisfy suitable conditions (for example, see \cite{[11]}).
We refer to a {\it multitime hybrid differential game, with two equips of players}, whose {\it Bolza payoff} is
the sum between a multiple integral (volume) and a function of the final
event (the terminal cost) and whose evolution PDE is an $m$-{\it flow}.
More specifically, this paper refers to the following optimal control problem:

{\it Find
$$\min_{v(\cdot)\in V}\max_{u(\cdot)\in U}I(u(\cdot),v(\cdot))=\int_{\Omega_{0T}} L (s,x(s),u(s),v(s))ds + g(x(T)),$$
subject to the Cauchy problem
$$\frac{\partial x^i}{\partial s^\alpha}(s)=X^i_\alpha(s,x(s),u(s),v(s)),\,\,
x(0)=x_0,\, s\in \Omega_{0T}\subset \mathbb{R}_+^m,\, x\in \mathbb{R}^n,$$}
where $i=1,...,n$; $\alpha =1,...,m$; $u=(u^a)$, $a=1,...,p$, $v=(v^b)$, $b=1,...,q$ are the controls;
$ds=ds^1\wedge...\wedge ds^m$ is the volume element.

We vary the starting multitime and the initial point. We obtain a larger family of
similar multitime problems based on the functional
$$I_{t,x}(u(\cdot),v(\cdot))=\int_{\Omega_{tT}} L (s,x(s),u(s),v(s))ds+g(x(T))$$
and the multitime evolution constraint (Cauchy problem for first order PDEs system)
$$\frac{\partial x^i}{\partial s^\alpha}(s)=X^i_\alpha(s,x(s),u(s),v(s)),\,\,
x(t)=x,\, s\in \Omega_{tT}\subset \mathbb{R}_+^m,\, x\in \mathbb{R}^n.$$

\begin{remark} Sometimes, the existence of multitime totally min-max optimal controls and
sheets can be seen without any optimality arguments. For example, the problem
$$V(x)=\min_u \,\max_v\,\int_{\Omega_{tT}}((x(s)+u(s))^2-v(s)^2)ds,\,\,\frac{\partial x}{\partial s^\alpha}(s)=X_\alpha(s,x(s),u(s),v(s)),$$
with $s\in \Omega_{tT}$, $t\in \Omega_{0T}$,\,$x(0)=x_0$,
has a global min-max solution $V(x)=0$ for $u=-x,v=0$, and all $t,T,x_0$.
The set of those sheets is obviously a totally optimal field of sheets corresponding
to given $X_\alpha$ for which solutions of $\frac{\partial x}{\partial s^\alpha}(s)=X_\alpha(s,x(s),-x(s),0)$
exist over $[0,T]$ for any $x_0$.
\end{remark}

\begin{remark} Let us consider the problem: find $u(t)$ and $F(t)$ such that
$$W(F)=\min_u\int_{\Omega}\sum_{\alpha =1}^m(t^\alpha +u_\alpha(t))^2dt,\,\,\,u\nabla F=0,\,t\in \mathbb{R}^m, \,\Omega\subset \mathbb{R}^m.$$
This problem has a global optimal solution
$$W(F)=0,\, \hbox{for}\,\,\,u_\alpha(t) = - t^\alpha \not=0, \,F(t)= \varphi\left(\frac{t^1}{t^2},\cdots, \frac{t^\alpha}{t^{\alpha+1}},\cdots,
\frac{t^{m-1}}{t^m}, \frac{t^m}{t^1}\right),$$
with arbitrary function $\varphi$, any subset $\Omega$ and $m\geq 2$.
\end{remark}

\begin{definition} Let $\Psi$ and $\Phi$ be suitable strategies of the two equips of players.

(i) The function
$$ m(t,x)=\min_{\Psi\in \mathcal{B}} \max_{u(\cdot)\in \mathcal{U}} I_{t,x}[u(\cdot),\Psi[u](\cdot)]$$
is called the multitime lower value function.

(ii) The function
$$M(t,x)=\max_{\Phi\in \mathcal{A}} \min_{v(\cdot)\in \mathcal{V}} I_{t,x}[\Phi[v](\cdot),[v](\cdot)]$$
is called the multitime upper value function.
\end{definition}

The most important ingredient in our theory is the idea of {\it generating vector field}.
This mathematical ingredient allow the introduction of PDEs of divergence type (see \cite{[11]}).

\begin{definition} Let $D_{\alpha}$ be the total derivative and $c_{hyp}$, $C_{hyp}$ be hyperbolic constants.

(i) The vector field $(\mathrm{m}^{\alpha}(t,x))$ is called
the {\it generating lower vector field} of the lower value function $m(t,x),$ if
$$m(T,x(T))=c_{hyp}+m(t,x(t))+\int_{\Omega_{tT}}D_{\alpha}\mathrm{m}^{\alpha}(s,x(s))ds.$$

(ii) The vector field $(\mathrm{M}^{\alpha}(t,x))$ is called the {\it generating upper vector field}
of the upper value function $M(t,x),$ if
$$M(T,x(T))=C_{hyp}+M(t,x(t))+\int_{\Omega_{tT}}D_{\alpha}\mathrm{M}^{\alpha}(s,x(s))ds.$$
\end{definition}

The papers \cite{[1]}-\cite{[4]}, \cite{[12]} refer to viscosity solutions of Hamilton-Jacobi-Isaacs equations.
Our papers \cite{[5]}-\cite{[11]} are listed for understanding the multitime optimal control
and our recent results which led to the present work.

\section{Viscosity solutions of divergence type PDEs}

Let us recall some PDEs that admit viscosity solutions:
(i) the {\it Eikonal Equation}: $|Du| = f(x)$, which is related to geometric optics (rays);
(ii) {\it (stationary) Hamilton-Jacobi equation}: $H(x, u, Du) = 0,\, \Omega \subset \mathbb{R}^n$,
where $H : \Omega \times \mathbb{R} \times \mathbb{R}^n \to \mathbb{R}$ is called Hamiltonian and is continuous
and in general convex in $p$ (i.e. in the gradient-variable); the eikonal
equation is in particular a (stationary) Hamilton-Jacobi equation;
(iii) {\it (single time evolution) Hamilton-Jacobi equation}: $u_t + H(x, u, Du) = 0$, $\mathbb{R}^n \times (0, \infty)$;
(iv) the {\it single time Hamilton-Jacobi-Bellman equation}, based on the Hamiltonian
$$H(x,p)=\sup_{a\in A}\{-f(x,a)\cdot p - l(x,a);$$
this is a particular Hamilton-Jacobi equation which is very important in
single-time control theory and economics;
(v) in {\it Differential Games}, with two equips of players, the (lower) value function
$$v(t,x) = \inf_{\alpha \in \mathcal{A}[\mathcal{B}]}\sup_{\beta\in \mathcal{B}}J(t,x,\alpha, \beta)$$
solves the Hamilton-Jacobi-Bellman PDE
$$ u_t + H(x, Du) = 0,\, x \in \mathbb{R}^n,$$
where
$$H(t,x, p) := \min_{b\in B}\max_{a\in A}\{-f(t,x,a,b)\cdot p - l(t,x,a,b)\}.$$
(vi) there exists {\it multitime Hamilton-Jacobi PDEs systems} which admit no
viscosity solution, in the non-convex setting, even when the Hamiltonians
are in involution \cite{[12]}.

Our aim is to introduce multitime divergence type PDEs that admit viscosity solutions.
Viscosity solutions need not be differentiable anywhere and thus are not sensitive to
the classical problem of the crossing of characteristics.

The generating lower and upper vector fields, denoted by $(\mathrm{m}^\alpha)$ and $(\mathrm{M}^\alpha)$, define the relations
$$m(t)-m(t+h)=-c_{hyp}-\int_{\Omega_{tt+h}} D_\alpha \mathrm{m}^\alpha\, ds.$$
$$M(t)-M(t+h)=-C_{hyp}-\int_{\Omega_{tt+h}} D_\alpha \mathrm{M}^\alpha\, ds.$$

The key original idea is that the generating
upper vector field or the generating lower vector field
are solutions of divergence type PDEs, defined in the next Theorem.
Our PDEs contain some implicit assumptions, and are valid under certain conditions
which are defined and analyzed for multitime hybrid differential games.

\begin{theorem}
(i) The generating upper vector field $(\mathrm{M}^\alpha(t,x))$ is the viscosity solution of
the multitime upper divergence type PDE
$$\frac{\partial \mathrm{M}^{\alpha}}{\partial t^\alpha}(t,x)+\min_{v\in V} \max_{u\in U} \left\lbrace  \frac{\partial \mathrm{M}^{\alpha}}{\partial x^i}(t,x) X_\alpha^i(t,x,u,v)+L(t,x,u,v)\right\rbrace =0,$$
which satisfies the terminal condition $\mathrm{M}^\alpha(T,x)=g^\alpha(x).$

(ii) The generating lower vector field $(\mathrm{m}^\alpha(t,x))$ is the viscosity solution of
the multitime lower divergence type PDE
$$\frac{\partial \mathrm{m}^\alpha}{\partial t^\alpha}+\max_{u \in U} \min_{v \in V} \left\lbrace  \frac{\partial \mathrm{m}^\alpha}{\partial x^i}(t,x) X_\alpha^i(t,x,u,v)+L(t,x,u,v)\right\rbrace =0,$$
which satisfies the terminal condition $\mathrm{m}^\alpha(T,x)=g^\alpha(x).$
\end{theorem}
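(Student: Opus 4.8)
The plan is to combine the generating-field identity with a multitime dynamic programming principle, localize both relations on a shrinking box $\Omega_{t,t+h}$, and pass to the limit $h\to 0$ to extract the divergence-type PDE; the viscosity inequalities are then obtained by inserting $C^1$ test fields in place of the value. I argue part (i); part (ii) follows after exchanging the two equips and the operations $\min_v\max_u\leftrightarrow\max_u\min_v$.

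The two local ingredients are, first, the generating-field relation already recorded in the text,
$$M(t+h)-M(t)=C_{hyp}+\int_{\Omega_{t,t+h}}D_\alpha\mathrm{M}^\alpha(s,x(s))\,ds,$$
and, second, the multitime dynamic programming principle for the upper value
$$M(t,x)=\max_{\Phi\in\mathcal{A}}\min_{v(\cdot)\in\mathcal{V}}\Bigl\{\int_{\Omega_{t,t+h}}L\bigl(s,x(s),\Phi[v](s),v(s)\bigr)\,ds+M\bigl(t+h,x(t+h)\bigr)\Bigr\}.$$
Substituting the first identity into the second and using that $M(t,x)=M(t,x(t))$ is independent of the controls, it factors out of the $\max_\Phi\min_v$ and cancels, leaving the local balance
$$0=\max_{\Phi}\min_{v}\Bigl\{\int_{\Omega_{t,t+h}}\bigl(L+D_\alpha\mathrm{M}^\alpha\bigr)\,ds+C_{hyp}\Bigr\}.$$

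Dividing by the volume $\lvert\Omega_{t,t+h}\rvert$ and letting $h\to 0$, the strategy game over the shrinking box reduces to a pointwise static game; the nonanticipating strategies collapse to instantaneous controls, the order $\max_\Phi\min_v$ passes to $\min_{v\in V}\max_{u\in U}$, the hyperbolic constant $C_{hyp}$ is calibrated so as to drop out of the normalized limit, and the mean-value theorem for multiple integrals yields the pointwise identity
$$\min_{v\in V}\max_{u\in U}\bigl\{D_\alpha\mathrm{M}^\alpha(t,x)+L(t,x,u,v)\bigr\}=0.$$
Writing $D_\alpha\mathrm{M}^\alpha=\partial\mathrm{M}^\alpha/\partial t^\alpha+(\partial\mathrm{M}^\alpha/\partial x^i)X_\alpha^i$ along the $m$-flow and pulling the control-free temporal divergence out of the $\min_v\max_u$ gives exactly the stated upper PDE. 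The terminal condition $\mathrm{M}^\alpha(T,x)=g^\alpha(x)$ is read off at $t=T$, where $\Omega_{TT}$ has zero volume, the running integral vanishes, and only the terminal datum of the Bolza payoff survives. To reach the viscosity sense I would replace $\mathrm{M}^\alpha$ by a $C^1$ test field $\varphi^\alpha$ touching it from above (respectively below) at an interior point $(t_0,x_0)$ and feed $\varphi^\alpha$ into the local balance before taking the limit: the one-sided inequalities coming from the dynamic programming principle then force the subsolution (respectively supersolution) inequality for $\varphi^\alpha$, which is precisely the pointwise viscosity requirement.

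The step I expect to be the main obstacle is justifying the multitime dynamic programming principle itself, together with the limit extraction. Unlike single time, the volume domain does not split as $\Omega_{tT}=\Omega_{t,t+h}\cup\Omega_{t+h,T}$ --- two $m$-dimensional boxes sharing only the corner $t+h$ cannot tile the large box for $m\ge 2$ --- so the usual concatenation of trajectories underlying the principle is not available and must be replaced by a genuinely multitime argument exploiting the complete integrability of the $m$-flow. The same integrability, namely the closedness making $D_\alpha\mathrm{M}^\alpha$ an exact divergence so that $\int_{\Omega_{tT}}D_\alpha\mathrm{M}^\alpha\,ds$ depends only on the endpoints, is what renders the generating-field relation well posed. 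A secondary difficulty is the Isaacs saddle hypothesis ensuring the static game value exists and that the order $\min_v\max_u$ is consistent with the definition $M=\max_\Phi\min_v$; without it the functions $m$ and $M$, and the matching sub/supersolution inequalities, need not agree.
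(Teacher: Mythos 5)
Your overall strategy---the multitime dynamic programming principle for $M$, the generating-field identity turning increments of $M$ into integrals of $D_\alpha\mathrm{M}^\alpha$ over a small box $\Omega_{t,t+h}$, and $C^1$ test fields touching from above/below to produce the sub- and supersolution inequalities---is the same route the paper takes. But your sketch stops exactly where the actual work begins. The sentence ``the one-sided inequalities coming from the dynamic programming principle then force the subsolution (respectively supersolution) inequality'' conceals the step that the paper isolates as its \emph{Fundamental Lemma} and proves in a separate section: from the pointwise assumption $\frac{\partial \mathrm{w}^\alpha}{\partial t^\alpha}(t_0,x_0)+H^+\bigl(t_0,x_0,\frac{\partial \mathrm{w}}{\partial x}(t_0,x_0)\bigr)\le-\theta$ one must produce a \emph{single} control $v^*\in\mathcal{V}$ such that, for \emph{every} strategy $\Phi$, the integrand $L+\frac{\partial\mathrm{w}^\alpha}{\partial x^i}X^i_\alpha+\frac{\partial\mathrm{w}^\alpha}{\partial s^\alpha}$ stays $\le-\theta/2$ on the whole box (uniform continuity in $s$ and $x(s)$); and in the opposite case one must assemble a genuine nonanticipating strategy $\Psi[v](s)=\psi(v(s))$ from a finite subcover of the compact control set, with $\psi$ a piecewise-constant selection making the integrand $\ge\theta/2$ uniformly in $v$. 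Without these constructions the integral inequalities (the paper's relations $(2)$ and $(4)$) that get contradicted against the dynamic programming identity are simply not available, so your argument has a genuine gap at its pivot point.

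A secondary issue: the first half of your proposal derives the PDE by dividing by $\lvert\Omega_{t,t+h}\rvert$ and letting $h\to 0$ applied to $\mathrm{M}^\alpha$ itself, which presupposes the differentiability that the viscosity framework is designed to avoid; this portion is not wrong as a heuristic but does no work toward the stated theorem and should be folded entirely into the test-function argument. On the credit side, your observation that for $m\ge 2$ the boxes $\Omega_{t,t+h}$ and $\Omega_{t+h,T}$ do not tile $\Omega_{tT}$, so the concatenation argument behind the dynamic programming principle needs a genuinely multitime justification via complete integrability of the $m$-flow, is a real difficulty that the paper itself passes over in silence; flagging it is to your credit, but flagging is not resolving, and the principle is load-bearing for both your argument and the paper's.
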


\begin{proof}
To simplify the divergence type PDEs, we introduce
the so-called upper and lower Hamiltonian defined respectively by
$$H^+(t,x,p)=\min_{v\in \mathcal{V}} \max_{u \in \mathcal{U}}\lbrace p_i^\alpha(t) X_\alpha^i(t,x,u,v)+L(t,x,u,v)\rbrace,$$
$$H^-(t,x,p)=\max_{u\in \mathcal{U}} \min_{v\in \mathcal{V}}\lbrace p_i^\alpha(t) X_\alpha^i(t,x,u,v)+L(t,x,u,v)\rbrace.$$

We prove only the first statement. For $s\in \Omega_{tt+h},$ we use the Cauchy problem
$$\frac{\partial x^i}{\partial s^\alpha}(s)=X^i_\alpha(s,x(s),u(s),v(s)),\,\,
x(t)=x, \,s\in \Omega_{tt+h}\subset \mathbb{R}_+^m,\, x\in \mathbb{R}^n$$
and the cost functional (volume)
$$I_{t,x}(u(\cdot),v(\cdot))=\int_{\Omega_{tt+h}} L (s,x(s),u(s),v(s))\,ds.$$
For $s\in \Omega_{tT}\setminus \Omega_{tt+h}$, the cost is $M(t+h,x(t+h))$.
Consequently, $$I_{t,x}(u(\cdot),v(\cdot))=\int_{\Omega_{tt+h}} L(s,x(s),u(s),v(s))\,ds+M(t+h,x(t+h)),$$
with $M(t,x)\geq M(t+h,x(t+h))$, because $M(t,x)$ is the greatest cost.
Thus we have the multitime dynamic programming optimality condition
$$
M(t,x)=\max_{\Phi\in \mathcal{A}(t)}\min_{v\in \mathcal{V}(t)}\bigg\{\int_{\Omega_{tt+h}} L (s,x(s),\Phi [v](s),v(s))\,ds +M(t+h,x(t+h))\bigg\}.
$$

Let $(\mathrm{w}^\alpha)\in C^1(\Omega_{0T}\times \mathbb{R}^n)$ be
a generating vector field.
We analyse two cases:

\medskip
{\bf Case 1} Suppose $\mathrm{M}^\alpha-\mathrm{w}^\alpha$ attains a local maximum at $(t,x)\in \Omega_{0T}\times \mathbb{R}^n.$
To prove the inequality
$$\frac{\partial \mathrm{w}^\alpha}{\partial t^\alpha}(t,x)+H^+\left( t,x,\frac{\partial \mathrm{w}}{\partial x^i}(t,x)\right) \geq 0,\eqno(1)$$
it is enough to prove that the relation
$$
\frac{\partial \mathrm{w}^\alpha}{\partial t^\alpha}(t,x)+H^+\left( t,x,\frac{\partial \mathrm{w}}{\partial x^i}(t,x)\right) \leq -\theta <0,
$$
is false, for some constant $\theta>0.$

We use the {\it Fundamental Lemma} in the next Section. For a sufficiently small $||h|| >0$,
all ${w}\in \mathcal{A}(t),$  for $v\in V(t),$ we obtain that the relation
$$
\int_{\Omega_{tt+h}} \bigg( L(s,x(s),\Phi[v](s),v(s)) +\frac{\partial {w}^\alpha}{\partial x^i} X_\alpha^i(s,x(s),\Phi[v](s),v(s))+\frac{\partial {w}^\alpha}{\partial s^\alpha}\bigg)ds\leq -\frac{\theta\, \hbox{vol}\,(h)}{2}
$$
holds for $v\in \mathcal{V}.$ Thus
$$
\max_{\Phi\in \mathcal{A}(t)}\min_{v\in \mathcal{V}(t)} \bigg\{  \int_{\Omega_{tt+h}} \bigg(L(s,x(s),\Phi[v](s),v(s)) +\frac{\partial {w}^\alpha}{\partial x^i} X_\alpha^i(s,x(s),\Phi[v](s),v(s))+\frac{\partial {w}^\alpha}{\partial s^\alpha}\bigg)ds\bigg\}
$$
$$
\leq -\frac{\theta\, \hbox{vol}\,(h)}{2},\eqno(2)
$$
with $x(\cdot)$ solution of the previous Cauchy problem.

Because $\mathrm{M}^\alpha -\mathrm{w}^\alpha$ has a local maximum at $(t,x),$ we have
$$\mathrm{w}^\alpha(t+h,x(t+h))-\mathrm{w}^\alpha(t,x)\geq \mathrm{M}^\alpha(t+h,x(t+h))-\mathrm{M}^\alpha(t,x).$$
Dividing by $h^\alpha$, taking the limit for $h^\alpha \to 0$, summing after $\alpha$, we find the inequality
$D_\alpha \mathrm{w}^\alpha \geq D_\alpha \mathrm{M}^\alpha$
and finally
$$M(t,x)-M(t+h,x(t+h))\geq w(t,x)-w(t+h,x(t+h)).$$

The multitime dynamic programming optimality condition and the local maximum definition give us
$$M(t,x)-M(t+h,x(t+h))=\max_{\Phi\in \mathcal{A}(t)}\min_{v\in \mathcal{V}(t)}\left\lbrace  \int_{\Omega_{tt+h}} L(s,x(s),\Phi [v](s),v(s))ds\right\rbrace. $$

Consequently, we have
$$\max_{\Phi\in \mathcal{A}(t)}\min_{v\in \mathcal{V}(t)}\left\lbrace  \int_{\Omega_{tt+h}} L (s,x(s),\Phi [v](s),v(s))ds\right\rbrace \geq {w}(t,x)-{w}(t+h,x(t+h))$$
or
$$\max_{\Phi\in \mathcal{A}(t)}\min_{v\in \mathcal{V}(t)}\left\lbrace  \int_{\Omega_{tt+h}} L(s,x(s),\Phi [v](s),v(s))ds\right\rbrace +{w}(t+h,x(t+h))-{w}(t,x)\geq 0.$$

By the definition of the generating vector field, the $m$-dimensional hyperbolic difference is
$$
{w}(t+h,x(t+h))-{w}(t,x)=\int_{\Omega_{tt+h}} D_\alpha \mathrm{w}^\alpha ds
 $$
 $$
 =\int_{\Omega_{tt+h}} \bigg(\frac{\partial {w}^\alpha}{\partial x^i} X_\alpha^i(s,x(s),\Phi[v](s),v(s))+\frac{\partial {w}^\alpha}{\partial s^\alpha}\bigg)ds
$$
 and thus the contradiction with $(2)$ arises.

\medskip
{\bf Case 2} Suppose $\mathrm{M}^\alpha-\mathrm{w}^\alpha$ attains a local minimum at $(t,x)\in \Omega_{0T}\times \mathbb{R}^n.$
Analogously to the previous case, we shall prove the relation
$$
\frac{\partial {w}^\alpha}{\partial t^\alpha}(t,x)+H^+(t,x,\frac{\partial {w}}{\partial x^i}(t,x))\leq 0\eqno(3)
$$
by supposing the contrary
$$
\frac{\partial {w}^\alpha}{\partial t^\alpha}(t,x)+H^+(t,x,\frac{\partial {w}}{\partial x^i}(t,x))\geq \theta >0,
$$
for some constant $\theta>0.$

According to {\it Fundamental Lemma} (see next Section), for each sufficiently
small $||h|| >0$ and all ${w}\in \mathcal{A}(t),$ the foregoing results imply that the relation
$$
\int_{\Omega_{tt+h}} \left(L(s,x(s),\Phi[v](s),v(s))+\frac{\partial {w}^\alpha}{\partial x^i} X_\alpha^i(s,x(s),\Phi[v](s),v(s)) +\frac{\partial {w}^\alpha}{\partial s^\alpha}\right)ds\geq\frac{\theta\, \hbox{vol}(h)}{2}
$$
holds for $v\in V(t)$. It follows that the relation
$$
\max_{\Phi\in \mathcal{A}(t)}\min_{v\in \mathcal{V}(t)} \bigg\{\int_{\Omega_{tt+h}} \left(L(s,x(s),\Phi[v](s),v(s))+\frac{\partial {w}^\alpha}{\partial x^i} X_\alpha^i(s,x(s),\Phi[v](s),v(s))\right.$$
$$
\hspace{5cm}\left.+\frac{\partial {w}^\alpha}{\partial s^\alpha}\right)ds\bigg\}\geq\frac{\theta\, \hbox{vol}(h)}{2}\eqno(4)
$$
is true.

Because $\mathrm{M}^\alpha-\mathrm{w}^\alpha$ has a minimum at $(t,x),$ we have the inequality
$$
\mathrm{M}^\alpha(t,x)-\mathrm{w}^\alpha(t,x)\leq \mathrm{M}^\alpha(t+h,x(t+h))-\mathrm{w}^\alpha(t+h,x(t+h))
$$
and so
$D_\alpha \mathrm{w}^\alpha \leq D_\alpha \mathrm{M}^\alpha.$
 By the local minimum definition and by the multitime dynamic programming optimality condition, we can state the relation
$$M(t,x)-M(t+h,x(t+h))=\max_{\Phi\in \mathcal{A}(t)}\min_{v\in \mathcal{V}(t)}\left\lbrace  \int_{\Omega_{tt+h}} L (s,x(s),\Phi [v](s),v(s))ds\right\rbrace. $$
Consequently, we obtain
$$
\max_{\Phi\in \mathcal{A}(t)}\min_{v\in \mathcal{V}(t)}\bigg\{ \int_{\Omega_{tt+h}} L (s,x(s),\Phi [v](s),v(s))ds\bigg\} +{w}(t+h,x(t+h))-{w}(t,x)\leq 0.
$$

On the other hand, we know that
$$
{w}(t+h,x(t+h))-{w}(t,x)=\int_{\Omega_{tt+h}} D_\alpha{w}^\alpha ds
$$
$$
=\int_{\Omega_{tt+h}} \bigg(\frac{\partial {w}^\alpha}{\partial x^i} X_\alpha^i(s,x(s),\Phi[v](s),v(s))+\frac{\partial {w}^\alpha}{\partial s^\alpha}\bigg)ds.
$$
The last two relations contradicts the relation $(4)$ and thus the relation $(3)$ must be true.
\end{proof}

\section{Fundamental contradict Lemma}

It is preferable to put the Lemma in this Section to receive notations from the previous Section.
\begin{lemma}\label{L-1}
Let ${w}\in C^1(\Omega_{0T}\times \mathbb{R}^n)$ and the associated generating vector field
$(\mathrm{w}^\alpha)$.

(i) If $\mathrm{M}^\alpha-\mathrm{w}^\alpha$ attains a local maximum at $(t_0,x_0)\in \Omega_{0T}\times \mathbb{R}^n$,
for each $\alpha,$ and
$$\frac{\partial\mathrm{w}^\alpha}{\partial t^\alpha}(t_0,x_0)+ H^+\left(t_0,x_0,\frac{\partial\mathrm{w}}{\partial x^i}(t_0,x_0)\right)\leq - \theta <0,$$
then for all sufficiently small $||h|| >0$, there exists a control $v=(v_\alpha)\in {\mathcal V}(t_0)$ such that the relation $(2)$ holds for all strategies $\Phi\in {\mathcal A}(t_0)$.

(ii) If $\mathrm{M}^\alpha-\mathrm{w}^\alpha$ attains a
local minimum at $(t_0,x_0)\in \Omega_{0T}\times \mathbb{R}^n$, for each $\alpha,$ and
$$\frac{\partial\mathrm{w}^\alpha}{\partial t^\alpha}(t_0,x_0)+ H^+\left(t_0,x_0,\frac{\partial\mathrm{w}}{\partial x^i}(t_0,x_0)\right)\geq \theta >0,$$
then for all sufficiently small $||h|| >0$, there exists a control $u=(u_\alpha)\in {\mathcal U}(t_0)$ such that the relation $(4)$ holds for all strategies $\Psi\in {\mathcal B}(t_0)$.
\end{lemma}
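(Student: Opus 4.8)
The plan is to establish the Lemma as the pointwise-to-integral bridge that powers both cases of the Theorem, relying on three ingredients: the min--max structure of $H^+$, uniform continuity of the data, and continuous dependence of the $m$-flow trajectories on the controls. I treat part (i) in detail, since (ii) is its order-reversed twin. For (i), I start from the strict pointwise hypothesis
$$\frac{\partial\mathrm{w}^\alpha}{\partial t^\alpha}(t_0,x_0)+H^+\!\left(t_0,x_0,\frac{\partial\mathrm{w}}{\partial x^i}(t_0,x_0)\right)\leq-\theta<0.$$
Since $H^+=\min_{v\in\mathcal V}\max_{u\in\mathcal U}\{\,p_i^\alpha X_\alpha^i+L\,\}$, the minimum over the (assumed compact) set $\mathcal V$ is attained, or, failing attainment, I select a $\hat v$ that misses it by at most $\theta/4$. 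Fixing this $\hat v$ and using that the inner $\max_u$ dominates every individual $u$, I would obtain the key uniform-in-$u$ inequality
$$\frac{\partial\mathrm{w}^\alpha}{\partial t^\alpha}(t_0,x_0)+\frac{\partial\mathrm{w}^\alpha}{\partial x^i}(t_0,x_0)\,X_\alpha^i(t_0,x_0,u,\hat v)+L(t_0,x_0,u,\hat v)\leq-\tfrac{3\theta}{4}\qquad\text{for every }u.$$

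Next I would propagate this strict inequality off the point $(t_0,x_0)$. Because $\mathrm{w}\in C^1$ and $X_\alpha^i,L$ are continuous while $u$ ranges over a compact set, the left-hand side is jointly uniformly continuous in $(s,x)$ uniformly in $u$; hence there is a radius $r>0$ for which the left-hand side stays $\leq-\theta/2$ for every $u$ whenever $\|(s,x)-(t_0,x_0)\|<r$. I then invoke continuous dependence for the Cauchy problem: since $X_\alpha^i$ is bounded by some constant $C$ on the relevant compact, every solution with $x(t_0)=x_0$ satisfies $\|x(s)-x_0\|\leq C\|h\|$ on $\Omega_{t_0,t_0+h}$, and this bound does not depend on the control $\Phi[v](\cdot)$ that is fed into the flow. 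Choosing $\|h\|$ small enough that $(s,x(s))$ remains inside the $r$-ball for \emph{every} strategy $\Phi$, I evaluate the pointwise inequality along the trajectory with $v(s)\equiv\hat v$ (constant, hence admissible in $\mathcal V(t_0)$) and integrate over $\Omega_{t_0,t_0+h}$, whose volume is $\mathrm{vol}(h)$, to reach the integral bound $\leq-\tfrac{\theta}{2}\,\mathrm{vol}(h)$ for every $\Phi$. As this holds for all $\Phi$ with the single control $\hat v$, applying $\max_\Phi$ and then $\min_v$ yields relation $(2)$.

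For part (ii) I reverse every inequality. From $\frac{\partial\mathrm{w}^\alpha}{\partial t^\alpha}+H^+\geq\theta$ and the same min--max structure, the outer $\min_v$ forces that for \emph{each} $v$ there is a (near-)maximizing $u$; selecting such a $u$ measurably in $v$ defines the required object (equivalently, in the order-reversed lower game of the statement, the control $u$ that is good against every $\Psi\in\mathcal B(t_0)$). Along this selection the complementary pointwise inequality $\geq\theta/2$ holds on a neighborhood, and the identical confinement-and-integration argument produces relation $(4)$.

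I expect the main obstacle to be \emph{uniformity}: the radius $r$ and the threshold on $\|h\|$ must be chosen independently of the adversary's control, and in (ii) the selected $u$ must be shown to depend measurably and nonanticipatingly on $v$. Compactness of $\mathcal U$ and $\mathcal V$ together with uniform continuity of $X_\alpha^i$ and $L$ are precisely what make the modulus of continuity and the velocity bound $C$ uniform across all admissible controls; the measurable-selection point is then settled by a standard selection theorem applied to the optimizing arguments of the Hamiltonian $H^+$.
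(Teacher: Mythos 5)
Your proposal is correct and follows essentially the same route as the paper: in both cases the strict pointwise inequality for $\min_v\max_u\Lambda$ at $(t_0,x_0)$ is converted, via uniform continuity of the data and confinement of every trajectory starting at $x_0$ to a small ball for $\|h\|$ small (uniformly in the controls), into a one-signed integrand over $\Omega_{t_0\,t_0+h}$, which is then integrated to give $(2)$ resp.\ $(4)$. The only divergence is in part (ii): where you invoke an abstract measurable selection theorem to choose a near-maximizing $u=u(v)$, the paper builds the response map $\psi:\mathcal{V}\to\mathcal{U}$ explicitly by covering the compact set $\mathcal{V}$ with finitely many balls $B(v_i,r_i)$ on which a fixed $u_i$ works and taking the resulting piecewise-constant (hence trivially measurable and nonanticipating) selection --- a more elementary device that sidesteps the selection-theorem machinery you flag as the main obstacle.
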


\begin{proof} The basic object is the $m$-form (identified with one component function, see multiple integral)
$$\Lambda= L(s,x(s),\Phi[v](s),v(s))+\frac{\partial \mathrm{w}^\alpha}{\partial x^i} X_\alpha^i(s,x(s),\Phi[v](s),v(s))+\frac{\partial \mathrm{w}^\alpha}{\partial t^\alpha}.
$$

(i) By hypothesis
$$\min_{v\in {\mathcal V}}\,\, \max_{u\in {\mathcal U}} \,\Lambda (t_0,x_0,u,v)\leq -\theta <0.$$
Consequently there exists some control $v^*\in {\mathcal V}$ such that
$$\max_{u\in {\mathcal U}} \, \Lambda (t_0,x_0,u,v^*)\leq -\theta.$$
By the uniform continuity of the $m$-form $\Lambda,$ we have
$$\max_{u\in {\mathcal U}} \, \Lambda (t_0,x(s),u,v^*)\leq - \frac{1}{2}\,\theta$$
provided $s\in \Omega_{t_0t_0+h}$, for any small $||h|| >0$, and $x(\cdot)$ is solution of PDE on $\Omega_{t_0t_0+h}$,
for any $u(\cdot),v(\cdot)$, with initial condition $x(t_0)=x_0$. For the control $v(\cdot)=v^*$ and for any strategy $\Phi\in {\mathcal A}(t_0)$, we find
$$
L(s,x(s),\Phi[v](s),v(s))+\frac{\partial \mathrm{w}^\alpha}{\partial x^i} X_\alpha^i(s,x(s),\Phi[v](s),v(s))+\frac{\partial \mathrm{w}^\alpha}{\partial t^\alpha}\leq \frac{1}{2}\, \theta,
$$
for $s\in \Omega{t_0t_0+h}$. Taking the hyperbolic primitive integral on $\Omega_{t_0t_0+h}$, we obtain the relation $(2)$.

(ii) The inequality in the Lemma reads
$$\min_{v\in {\mathcal V}}\,\, \max_{u\in {\mathcal U}} \,\Lambda (t_0,x_0,u,v)\geq \theta >0.$$
Consequently, for each control $v\in {\mathcal V}$ there exists a control $u=u(v)\in U$ such that
$$\Lambda (t_0,x_0,u,v)\geq \theta.$$
The uniform continuity of the $m$-form $\Lambda$ implies
$$\Lambda (t_0,x_0,u,\xi)\geq \frac{3}{4}\,\theta,\,\,\forall \xi\in B(v,r)\cap V\, \hbox{and some}\,\, r=r(v)>0.$$
Due to compactness of ${\mathcal V}$, there exists finitely many distinct points
$$v_1,...,v_n \in {\mathcal V}; \,u_1,..., u_n\in {\mathcal U}$$
and the numbers $r_1,...,r_n >0$ such that ${\mathcal V}\subset \bigcup_{i=1}^{n} B(v_i,r_i)$ and
$$\Lambda (t_0,x_0,u_i,\xi)\geq \frac{3}{4}\,\theta,\,\,\forall \xi\in B(v_i,r_i).$$

Define
$$\psi:{\mathcal V}\to {\mathcal U},\, \psi(v)=u_k\,\, \hbox{if}\,\, v\in B(u_k,r_k)\setminus \bigcup_{i=1}^{k-1} B(u_i,r_i),\,k=\overline{1,n}.$$
In this way, we have the inequality
$$\Lambda (t_0,x_0,\psi(v),v)\geq \frac{3}{4}\,\theta, \,\,\forall v\in {\mathcal V}.$$
Again, the uniform continuity of the $m$-form $\Lambda$ and a sufficiently small $||h|| >0$ give
$$\Lambda (s,x(s),\psi(v),v)\geq \frac{1}{2}\,\theta, \forall v\in {\mathcal V},\, s\in \Omega_{t_0t_0+h},$$
and any solution $x(\cdot)$ of PDE on $\Omega_{t_0t_0+h}$, for any $u(\cdot), v(\cdot)$ and with initial condition $x(t_0)=x_0$.
Now define the strategy
$$\Psi\in {\mathcal B}(t_0),\,\,\Psi[v](s)=\psi(v(s)),\, \forall v\in {\mathcal V}(t_0),\,s\in \Omega_{t_0t_0+h}.$$
Finally, we have the inequality
$$\Lambda (s,x(s),\Psi[v)](s),v(s))\geq \frac{1}{2}\,\theta, \forall \,s\in \Omega_{t_0t_0+h},$$
and taking the hyperbolic primitive integral on $\Omega_{t_0t_0+h}$, we find the result in Lemma.
\end{proof}

\section{Conclusions}

Our problem of multitime hybrid differential games requires some
original ideas issued from our research in multitime dynamic programming:
generating vector field, divergence type PDEs and their viscosity solutions.
Also, to formulate and prove such Theorems we need a proper geometric language.
This research shows how the general theory of multitime dynamic programming
can be applied to special problems, here multitime optimal games.

{\bf Acknowledgements} The results of this work were presented at
the IX-th International Conference of Differential Geometry and Dynamical Systems
(DGDS-2015), 8 - 11 October 2015, Bucharest, Romania.

Partially supported by University Politehnica of Bucharest and by Academy of Romanian Scientists.

%
%

\end{document}